\theoremstyle{plain}
\newtheorem{thm}{Theorem}[section]
\newtheorem{defi}[thm]{Definition}
\newtheorem{prop}[thm]{Proposition}
\newtheorem{lem}[thm]{Lemma}
\newtheorem{con}[thm]{Conjecture}
\theoremstyle{definition}
\newtheorem{rem}[thm]{Remark}
\def\nat{\mathbb{N}}
\def\rls{\mathbb{R}}
\def\eucl{\mathbb{R}}    
\def\exrls{(-\infty,\infty]}
\def\lam{\lambda}
\def\wto{\stackrel{w}{\to}}
\def\ol{\overline}
\def\argmin{\operatornamewithlimits{\arg\min}}
\def\fix{\operatorname{Fix}}
\def\Min{\operatorname{Min}}
\def\cldom{\operatorname{\ol{dom}}}
\def\as{\!\mathrel{\mathop:}=}    
\def\fun{\col\hs\to\exrls}
\def\map{\col\hs\to\hs}
\def\geo{\col[0,1]\to\hs}
\def\col{\colon}
\def\l{\left}
\def\r{\right}
\def\di{\operatorname{d}\!}                  
\def\pd{P_{(D)}}                                     
\def\hs{\mathcal{H}}                             
\def\calm{\mathcal{M}}                         
\begin{document}
\title[Asymptotic behavior]{The asymptotic behavior of a class 
of nonlinear semigroups in Hadamard spaces} 
\author[M. Ba\v{c}\'ak \and S. Reich]{Miroslav Ba\v{c}\'ak \and Simeon Reich}
\date{\today}
\thanks{The research leading to these results has received funding from 
the European Research Council under the European Union's Seventh Framework 
Programme (FP7/2007-2013) / ERC grant agreement no. 267087.
The second author was partially supported by the Israel Science 
Foundation (Grant 389/12), the Fund for the Promotion of Research at the 
Technion and by the Technion General Research Fund.}
\dedicatory{Dedicated to Professor Andrzej Granas with appreciation and 
respect}
\subjclass[2010]{Primary 47H20; Secondary 47H09, 47H10, 47N10, 60J45}
\keywords{Asymptotic behavior, Dirichlet problem, fixed point, Hadamard 
space, heat flow, nonlinear Markov operator, nonlinear semigroup, proximal point algorithm, 
resolvent}
\address{Miroslav Ba\v{c}\'ak, Max Planck Institute, Inselstr. 22, 04 103 Leipzig, Germany}
\email{bacak@mis.mpg.de}
\address{Simeon Reich, Department of Mathematics, The Technion -- Israel 
Institute of Technology, 32000 Haifa, Israel}
\email{sreich@tx.technion.ac.il}

\begin{abstract}
We study a nonlinear semigroup associated with a nonexpansive mapping on 
an Hadamard space and establish its weak convergence to a fixed point. 
A discrete-time counterpart of such a semigroup, the proximal point algorithm,
turns out to have the same asymptotic behavior. This complements several 
results in the literature -- both classical and more recent ones. 
As an application, we obtain a new approach to heat flows in singular spaces 
for discrete as well as continuous times.
\end{abstract}

\maketitle



\section{Introduction and Main Results}
Throughout this paper, the symbol $(\hs,d)$ stands for an Hadamard space, 
that is, a~complete geodesic metric space of nonpositive curvature. 
Given a nonexpansive mapping $F\map,$ we study the asymptotic behavior of 
its resolvent and of the nonlinear semigroup it generates.

As a motivation, we first recall some known results concerning gradient 
flow theory in Hadamard spaces. Let $f\fun$ be a 
convex lower semicontinuous (lsc) function. Given $\lam>0,$ 
we define the 
\emph{resolvent} of $f$ by
\begin{equation} \label{eq:defres}
J_\lam x \; \as \argmin_{y\in \hs} \l[f(y)+\frac1{2\lam}d(x,y)^2\r],\qquad 
x\in \hs,
\end{equation}
and put $J_0 x \; \as x$ for each $x\in \hs.$ The \emph{gradient flow 
semigroup} 
corresponding to $f$ is defined by
\begin{equation}
S_t x  \as \lim_{n\to\infty} \l(J_{\frac{t}n}\r)^{n} x,\qquad x\in \cldom f, \label{eq:defsem} 
\end{equation}
for every $t\in[0,\infty).$ Gradient flow semigroups in 
Hadamard spaces have been studied by several authors 
\cite{jost-ch,mayer,stojkovic,ppa,semigroup,lie} and the 
theory can be extended to more general metric spaces~\cite{ambrosio}.

If $C\subset\hs$ is a convex set, we denote the corresponding metric 
projection by $P_C.$ 
The set of minimizers of a function $f\fun$ is denoted by $\Min f.$

\begin{thm}\cite[Theorem 3.1.1]{jost2}\label{thm:jost}
Let $f\fun$ be a~convex lsc function and $x_0\in \hs.$ Assume there exists a sequence $\l(\lam_n\r)\subset(0,\infty)$ with $\lam_n\to\infty$ such that $\l(J_{\lam_n}x_0\r)$ is a bounded sequence. Then $f$ attains its 
minimum and 
\begin{equation*}
 \lim_{\lam\to\infty} J_\lam x_0=P_{\Min f} \l(x_0\r).
\end{equation*}
\end{thm}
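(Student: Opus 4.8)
The plan is to prove the two assertions in turn — that $m\as\inf_\hs f$ is attained (so that $p\as P_{\Min f}(x_0)$ is meaningful) and that the whole family $\{J_\lam x_0:\lam>0\}$ converges strongly to $p$ — the engine being a single ``Pythagorean'' estimate for the resolvent, after which only the standard weak-convergence machinery of Hadamard spaces is needed.

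\emph{Attainment of the minimum.} Since $J_{\lam_1}x_0\in\dom f$ we have $m<\infty$. For any $y\in\dom f$, minimality of $J_\lam x_0$ in \eqref{eq:defres} gives $f(J_\lam x_0)\le f(y)+\tfrac1{2\lam}d(x_0,y)^2$, hence $\limsup_{\lam\to\infty}f(J_\lam x_0)\le f(y)$ and, taking the infimum over $y$, $f(J_\lam x_0)\to m$ as $\lam\to\infty$; in particular $f(J_{\lam_n}x_0)\to m$. As $(J_{\lam_n}x_0)$ is bounded, it has a subsequence converging weakly to some $z\in\hs$, and since a convex lsc function on a Hadamard space is weakly lsc, $f(z)\le\liminf_n f(J_{\lam_n}x_0)=m$. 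Thus $m>-\infty$ and $z\in\Min f\neq\emptyset$; being a sublevel set of $f$, the set $\Min f$ is closed and convex, so $p\as P_{\Min f}(x_0)$ is a well-defined single point.

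\emph{A Pythagorean inequality.} Fix $\lam>0$. The function $y\mapsto f(y)+\tfrac1{2\lam}d(x_0,y)^2$ is $\tfrac1\lam$-strongly convex along geodesics — because $d(x_0,\cdot)^2$ is $2$-strongly convex in a Hadamard space, by the $\mathrm{(CN)}$ inequality, and $f$ is convex — and $J_\lam x_0$ is its unique minimizer, so
\[
 f(y)+\frac1{2\lam}d(x_0,y)^2 \ge f(J_\lam x_0)+\frac1{2\lam}d(x_0,J_\lam x_0)^2+\frac1{2\lam}d(J_\lam x_0,y)^2,\qquad y\in\hs.
\]
Putting $y=p$, using $f(p)=m\le f(J_\lam x_0)$, and multiplying by $2\lam$ gives
\begin{equation*}
 d(x_0,J_\lam x_0)^2+d(J_\lam x_0,p)^2 \le d(x_0,p)^2. \tag{$\ast$}
\end{equation*}
In particular $d(x_0,J_\lam x_0)\le d(x_0,p)$ for every $\lam>0$, so $\{J_\lam x_0:\lam>0\}$ is bounded. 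I expect $(\ast)$ — a quantitative form of the firm nonexpansiveness of the resolvent relative to a fixed point of $f$ — to be the main point of the proof; the rest is soft.

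\emph{From weak to strong convergence.} Let $\mu_k\to\infty$ and let $w$ be the weak limit of a subsequence of $(J_{\mu_k}x_0)$, which exists by boundedness. As above, weak lower semicontinuity of $f$ forces $w\in\Min f$; and $d(x_0,\cdot)$ is convex and continuous, hence weakly lsc, so $(\ast)$ yields $d(x_0,w)\le\liminf_k d(x_0,J_{\mu_k}x_0)\le d(x_0,p)$. Since $p$ is the unique nearest point of $\Min f$ to $x_0$, this gives $w=p$; as every subsequential weak limit is $p$, we conclude $J_\lam x_0\wto p$ as $\lam\to\infty$. Weak lower semicontinuity of $d(x_0,\cdot)$ then gives $\liminf_{\lam\to\infty}d(x_0,J_\lam x_0)\ge d(x_0,p)$, so, together with $(\ast)$, $d(x_0,J_\lam x_0)\to d(x_0,p)$. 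Substituting this back into $(\ast)$,
\[
 d(J_\lam x_0,p)^2 \le d(x_0,p)^2-d(x_0,J_\lam x_0)^2 \longrightarrow 0,
\]
i.e. $J_\lam x_0\to P_{\Min f}(x_0)$, which is the assertion.
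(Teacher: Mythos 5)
Your proof is correct. Note that the paper itself gives no proof of Theorem~\ref{thm:jost} --- it is quoted from Jost's book --- so the only in-paper argument to set it against is the proof of the analogous Theorem~\ref{thm:kirk} for the resolvent of a nonexpansive mapping. That proof (like Jost's original one) is purely metric: comparison triangles yield that $\lam\mapsto d(x,R_\lam x)$ is nondecreasing together with the estimate $d\l(x_\lam,x_\mu\r)^2\le d\l(x,x_\lam\r)^2-d\l(x,x_\mu\r)^2$, so boundedness forces the curve to be Cauchy, and the strong limit is then identified with the projection by a second triangle comparison. You instead take a variational route: the $\tfrac1\lam$-strong convexity of $y\mapsto f(y)+\tfrac1{2\lam}d\l(x_0,y\r)^2$ gives your inequality $(\ast)$, and you combine weak sequential compactness of bounded sets, weak lower semicontinuity of $f$ and of $d\l(x_0,\cdot\r)$, and uniqueness of the nearest point to upgrade weak to strong convergence (a Kadec--Klee-type argument). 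Both routes are sound. Yours trades the elementary Cauchy estimate for the weak-topology machinery of Hadamard spaces --- existence of asymptotic centers of bounded sequences and weak lower semicontinuity of convex lsc functions --- which the paper relies on elsewhere anyway; it also produces the existence of a minimizer directly from weak lsc rather than from the strong limit point, and $(\ast)$ immediately gives boundedness of the whole curve from boundedness along one sequence $\lam_n\to\infty$. The two unproved facts you invoke are standard in this setting, so there is no gap.
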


Recall that the \emph{proximal point algorithm} (PPA, for short) starting 
at a point $x_0\in\hs$ generates the sequence 
\begin{equation} \label{eq:ppa}
 x_n\as J_{\lam_n} x_{n-1}, \qquad n\in\nat, 
\end{equation}
where $\lam_n>0$ for each $n\in\nat.$
In contrast to Theorem~\ref{thm:jost}, it is known that the PPA 
converges only weakly.
\begin{thm}\cite[Theorem~1.4]{ppa} \label{thm:ppa}
Let $f\fun$ be a~convex lsc function attaining its minimum on $\hs.$ 
Then for an arbitrary starting point $x_0\in \hs$ and any sequence of 
positive reals $\l(\lam_n\r)$ such that $\sum_1^\infty\lam_n=\infty,$ 
the sequence $(x_n)\subset\hs$ defined by \eqref{eq:ppa} converges weakly 
to a~minimizer of $f.$
\end{thm}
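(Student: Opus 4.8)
The plan is to run the classical Opial-type argument for the proximal point algorithm, but in the geometric framework of Hadamard spaces, where weak convergence is $\Delta$-convergence (convergence in the sense of asymptotic centers). Put $m \as \min_\hs f$ (which exists by hypothesis) and fix $z\in\Min f$. The cornerstone is the standard resolvent inequality: for each $x\in\hs$ the function $y\mapsto f(y)+\frac1{2\lam}d(x,y)^2$ is $\frac1\lam$-strongly convex along geodesics, because $d(x,\cdot)^2$ is $2$-strongly convex in an Hadamard space and $f$ is convex; since $J_\lam x$ is its unique minimizer, a one-variable strong-convexity estimate gives
\begin{equation*}
d\l(J_\lam x, y\r)^2 + d\l(x, J_\lam x\r)^2 \le d(x,y)^2 + 2\lam\l(f(y) - f\l(J_\lam x\r)\r), \qquad x,y \in \hs.
\end{equation*}
First I would record the consequences of this inequality along the iteration $x_n = J_{\lam_n}x_{n-1}$.

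Taking $y=z$ and using $f(z)=m\le f(x_n)$ yields $d(x_n,z)^2 \le d(x_{n-1},z)^2 - d(x_{n-1},x_n)^2 \le d(x_{n-1},z)^2$; hence $(x_n)$ is Fej\'er monotone with respect to the nonempty closed convex set $\Min f$ — in particular bounded — and $\sum_{n\ge1} d(x_{n-1},x_n)^2 \le d(x_0,z)^2 < \infty$. Taking instead $y=x_{n-1}$ (equivalently, using that $J_{\lam_n}$ does not increase the value of $f$) shows that $\l(f(x_n)\r)$ is nonincreasing; and telescoping the estimate $2\lam_n\l(f(x_n)-m\r)\le d(x_{n-1},z)^2-d(x_n,z)^2$ gives $\sum_{n\ge1}\lam_n\l(f(x_n)-m\r) < \infty$. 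Since $\sum_1^\infty\lam_n=\infty$ and $f(x_n)-m\ge0$ is nonincreasing in $n$, this forces $f(x_n)\to m$.

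It then remains to identify the weak limit. I would show that every $\Delta$-cluster point of $(x_n)$ lies in $\Min f$: if a subsequence $x_{n_k}\wto x$, then, using that a convex lsc function on an Hadamard space is weakly (i.e. $\Delta$-) lower semicontinuous, we get $f(x)\le\liminf_k f(x_{n_k})=m$, so $x\in\Min f$. Finally I would invoke the Hadamard-space analogue of Opial's lemma: a bounded sequence that is Fej\'er monotone with respect to a nonempty set $C$ and all of whose $\Delta$-cluster points belong to $C$ must $\Delta$-converge to a point of $C$. Applied with $C=\Min f$, this gives $x_n\wto$ some minimizer of $f$, as claimed.

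The main obstacle is this last lemma — the point at which "only weak convergence" genuinely costs something — together with the supporting fact that convex lsc functions are $\Delta$-lsc; both belong to the standard toolbox of Hadamard-space geometry, but the Opial lemma is the real crux. Its proof rests on: bounded sequences admit $\Delta$-convergent subsequences and have unique asymptotic centers, and the characterization of a $\Delta$-limit via asymptotic centers supplies the strict inequality $\limsup_k d(x_{n_k},x) < \limsup_k d(x_{n_k},y)$ whenever $x_{n_k}\wto x$ and $y\ne x$. Since $d(x_n,c)$ converges for every $c\in C$ by Fej\'er monotonicity, this strict inequality rules out two distinct $\Delta$-cluster points, and a bounded sequence with a one-point $\Delta$-cluster set is $\Delta$-convergent. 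I would therefore isolate and prove this lemma first, and then assemble the three ingredients above.
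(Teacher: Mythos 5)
This theorem is quoted in the paper from the reference [Ba\v{c}\'ak, Israel J. Math. 194 (2013)] without proof, and your argument is correct and is essentially the proof given there: the strong-convexity (resolvent) inequality, Fej\'er monotonicity with respect to $\Min f$, the telescoped bound $\sum_n\lam_n\l(f(x_n)-\min f\r)<\infty$ combined with the monotonicity of $\l(f(x_n)\r)$, weak (that is, $\Delta$-) lower semicontinuity of convex lsc functions, and the Opial-type lemma (which is Proposition~\ref{prop:fejer}\eqref{item:iii} in this paper). No gaps.
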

It is not surprising that the gradient flow behaves in the same way.
\begin{thm}\cite[Theorem~1.5]{ppa} \label{thm:flow}
Let $f\fun$ be a~convex lsc function attaining its minimum on $\hs.$ Then, given a starting point $x_0\in\cldom f,$ the gradient flow $x_t\as S_t x_0$ 
converges weakly to a minimizer of~$f$ as $t\to\infty.$
\end{thm}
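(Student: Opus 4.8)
The plan is to run the classical Fej\'er-monotonicity-plus-Opial argument for $\Delta$-convergence, which is the notion of weak convergence in an Hadamard space, using as black boxes two basic regularity properties of the gradient flow that are available from \cite{jost-ch,mayer,ppa,semigroup}. Write $x_t\as S_tx_0$; since $f$ attains its minimum, $\Min f$ is nonempty and $\Min f\subset\cldom f.$ First I would show that the trajectory is Fej\'er monotone with respect to $\Min f.$ For $p\in\Min f,$ \eqref{eq:defres} immediately gives $J_\lam p=p$ for every $\lam>0,$ hence $\l(J_{t/n}\r)^np=p$ and so $S_tp=p$ for all $t\geq0$; thus $\Min f\subset\fix S_t.$ Each $S_t$ is nonexpansive, being a pointwise limit of compositions of the nonexpansive resolvents $J_{t/n},$ so by the semigroup law $d(x_t,p)=d\l(S_{t-s}x_s,S_{t-s}p\r)\leq d(x_s,p)$ whenever $0\leq s\leq t.$ Consequently $t\mapsto d(x_t,p)$ is nonincreasing, the limit $\ell_p\as\lim_{t\to\infty}d(x_t,p)$ exists for each $p\in\Min f,$ and in particular $(x_t)$ is bounded.

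Next I would invoke energy decay: $t\mapsto f(x_t)$ is nonincreasing, and the regularization estimate $f(x_t)-\min f\leq d(x_0,p)^2/(2t),$ valid for any $p\in\Min f,$ forces $f(x_t)\to\min f$ as $t\to\infty.$ Now take an arbitrary sequence $t_k\to\infty.$ By boundedness, $(x_{t_k})$ has a subsequence $\Delta$-converging to some $z\in\hs,$ and since a convex lsc function is $\Delta$-lower semicontinuous we obtain $f(z)\leq\liminf_k f(x_{t_k})=\min f,$ i.e.\ $z\in\Min f.$ Hence every $\Delta$-cluster point of $(x_t)$ as $t\to\infty$ is a minimizer of $f.$

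To upgrade ``cluster point'' to ``limit'' I would use Opial's property. Suppose $(x_t)$ did not $\Delta$-converge; then there are $s_k\to\infty$ and $t_k\to\infty$ with $x_{s_k}\stackrel{\Delta}{\to}z_1,$ $x_{t_k}\stackrel{\Delta}{\to}z_2,$ $z_1\neq z_2,$ and $z_1,z_2\in\Min f$ by the previous step. Since $d(x_t,z_1)$ and $d(x_t,z_2)$ converge as $t\to\infty,$ their limits along $(s_k)$ and along $(t_k)$ equal $\ell_{z_1}$ and $\ell_{z_2}.$ Opial's inequality applied to $x_{s_k}\stackrel{\Delta}{\to}z_1$ with the point $z_2$ gives $\ell_{z_1}<\ell_{z_2},$ and applied to $x_{t_k}\stackrel{\Delta}{\to}z_2$ with the point $z_1$ gives $\ell_{z_2}<\ell_{z_1}$ --- a contradiction. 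Therefore $(x_t)$ converges weakly, as $t\to\infty,$ to a point of $\Min f.$

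The main thing to check carefully is that the two external inputs really hold in this generality: the inclusion $\Min f\subset\fix S_t$ (elementary) and the energy-decay limit $f(x_t)\to\min f$ (which relies on the quantitative regularization estimate from the gradient-flow literature). Everything else is the soft $\Delta$-convergence bookkeeping above, and it runs exactly parallel to the proof of Theorem~\ref{thm:ppa}. A shorter alternative would be to deduce the statement directly from Theorem~\ref{thm:ppa} by approximating $x_t$ uniformly on bounded time intervals by the iterated resolvents $\l(J_{t/n}\r)^nx_0$ through a Crandall--Liggett-type estimate; I would nonetheless favor the direct argument, as it avoids quantitative discretization bounds.
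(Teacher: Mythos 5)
The paper does not prove this statement itself---it quotes it from \cite[Theorem~1.5]{ppa}---but your argument is correct and follows essentially the same route as that source: Fej\'er monotonicity of the trajectory with respect to $\Min f$ (via $\Min f\subset\fix S_t$ and the nonexpansiveness of each $S_t$), the regularization estimate forcing $f\l(x_t\r)\to\min f$, weak lower semicontinuity of convex lsc functions to place every weak cluster point in $\Min f$, and the Opial/asymptotic-center step, which is exactly Proposition~\ref{prop:fejer}\eqref{item:iii}, to obtain a unique weak limit. The two external inputs you flag are indeed valid in this generality (for $x_0\in\cldom f$ one has $S_tx_0\in\dom f$ for $t>0$ and the estimate $f\l(S_tx_0\r)-\min f\le d\l(x_0,p\r)^2/(2t)$ still holds), so no gap remains.
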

In a Hilbert space $H,$ one can define the resolvent and the semigroup for an 
arbitrary maximally monotone operator $A\col H\to2^H.$ The situation 
described above then corresponds to the case $A\as\partial f$ for a convex lsc function $f\col H\to\exrls.$ In particular, the semigroup in \eqref{eq:defsem} provides us with a solution to the parabolic problem
\begin{align*}
 \dot{u}(t) & \in -\partial f\l(u(t)\r) ,\quad t\in(0,\infty), \\
 u(0) & =u_0\in H
\end{align*}
for a curve $u:[0,\infty)\to H.$ Indeed, in this case $u(t)\as S_t u_0.$ 

In the present paper, we prove analogs of the above gradient flow results which in a Hilbert 
space~$H$ correspond to another important instance of a maximally
monotone operator, namely $A\as I-F,$ where $F\col H\to H$ is nonexpansive (that is, 
$1$-Lipschitz) and $I\col H\to H$ is the identity operator.

Let $F\map$ be a nonexpansive mapping. We now define its resolvent and 
the semigroup it generates as in~\cite{stojkovic}. Given a point $x\in 
\hs$ and a number $\lam>0,$ the mapping 
$G_{x,\lambda} : \hs \to \hs$ defined by 
\begin{equation}
G_{x,\lam}(y) \; \as \frac1{1+\lam}x+\frac{\lam}{1+\lam}Fy,\quad 
y\in \hs,
\end{equation}
is a strict contraction with Lipschitz constant $\frac{\lam}{1+\lam},$ and 
hence has a unique fixed point, which will be denoted $R_\lam x.$ 
The mapping $x \mapsto R_\lam x$ is called the \emph{resolvent} of $F.$

It is known that the limit
\begin{equation} \label{eq:exsemf}
 T_t x\as\lim_{n\to\infty} \l(R_{\frac{t}{n}}\r)^{n} x,\qquad x\in\hs,
\end{equation}
exists uniformly with respect to $t$ on each bounded subinterval of 
$[0,\infty).$ Moreover, the family $\l(T_t\r)$ is a strongly continuous semigroup of nonexpansive mappings \cite{stojkovic}. This definition appeared in \cite[Theorem 8.1]{reich-shafrir} in a similar context, namely, for a coaccretive operator on a hyperbolic space.

The following result is a counterpart of Theorem~\ref{thm:jost}. 
It was proved for the Hilbert ball in \cite[Theorem 24.1]{goebel-reich} and for a bounded Hadamard space in \cite[Theorem 26]{kirk}. 
The latter proof also works, however, without the boundedness assumption, as we demonstrate 
in Section \ref{sec:kirkproof} for the reader's convenience.
\begin{thm} \label{thm:kirk}
Let $F\map$ be a nonexpansive mapping and $x\in\hs.$ If there exists 
a sequence $\l(\lam_n\r)\subset(0,\infty)$ such that $\lam_n\to\infty$ and 
the sequence $\l( R_{\lam_n}x\r)_{n\in\nat}$ is bounded, 
then $\fix F$ is nonempty and
\begin{equation} \label{eq:kirklimit}
\lim_{\lam\to\infty} R_\lam x = P_{\fix F}(x).
\end{equation}
Conversely, if $\fix F\neq\emptyset,$ then the curve 
$\l(R_\lam x \r)_{\lam\in(0,\infty)}$ is bounded.
\end{thm}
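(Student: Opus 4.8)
The plan is to transplant to the Hadamard setting the classical Hilbert-space argument for the resolvent of $I-F$, the only nonelementary inputs being the four-point CAT(0) inequality and the basic theory of $\Delta$-convergence. First I would record the properties of the resolvent that follow at once from its definition as the unique fixed point of $G_{x,\lam}$: the mapping $x\mapsto R_\lam x$ is nonexpansive, $\fix R_\lam=\fix F$ (so $R_\lam q=q$ whenever $q\in\fix F$), and $\fix F$ is closed and convex, so that the metric projection $P_{\fix F}$ is well defined. Moreover, since $R_\lam x=\frac1{1+\lam}x+\frac{\lam}{1+\lam}F(R_\lam x)$ lies on the geodesic joining $x$ to $F(R_\lam x)$, a direct computation gives
\[
 d\bigl(x,F(R_\lam x)\bigr)=\tfrac{1+\lam}{\lam}\,d(x,R_\lam x)\qquad\text{and}\qquad d\bigl(R_\lam x,F(R_\lam x)\bigr)=\tfrac1\lam\,d(x,R_\lam x).
\]

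The heart of the argument is the estimate
\begin{equation*}
 d(R_\lam x,q)^2+d(x,R_\lam x)^2\le d(x,q)^2\qquad\text{for all }q\in\fix F\ \text{and}\ \lam>0. \tag{$\star$}
\end{equation*}
To obtain it, apply the CAT(0) inequality at the point $R_\lam x$, which sits on the geodesic from $x$ to $F(R_\lam x)$ at parameter $t=\frac{\lam}{1+\lam}$, and at the comparison point $q$; bound the resulting term $d\bigl(F(R_\lam x),q\bigr)^2$ by $d(R_\lam x,q)^2$ via $d\bigl(F(R_\lam x),q\bigr)=d\bigl(F(R_\lam x),Fq\bigr)\le d(R_\lam x,q)$, and rearrange. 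Substituting the two geodesic identities above then leaves the coefficient $\frac{1+\lam}{\lam}\ge1$ in front of $d(x,R_\lam x)^2$, and $(\star)$ follows. The converse assertion of the theorem is now immediate: if $q\in\fix F$, then $(\star)$ gives $d(R_\lam x,q)\le d(x,q)$ for every $\lam>0$, so the curve $\bigl(R_\lam x\bigr)_{\lam\in(0,\infty)}$ stays in a ball about $q$.

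For the main implication, assume $\bigl(R_{\lam_n}x\bigr)$ is bounded with $\lam_n\to\infty$. The identity $d\bigl(R_{\lam_n}x,F(R_{\lam_n}x)\bigr)=\frac1{\lam_n}d(x,R_{\lam_n}x)$ and the boundedness show that $\bigl(R_{\lam_n}x\bigr)$ is an approximate fixed point sequence for $F$. Passing to a $\Delta$-convergent subsequence $R_{\lam_{n_k}}x\rightharpoonup y$ (bounded sequences in a Hadamard space admit such subsequences) and invoking the demiclosedness principle for $I-F$ --- that is, the asymptotic-center argument whereby $\limsup_k d\bigl(R_{\lam_{n_k}}x,Fy\bigr)\le\limsup_k d\bigl(R_{\lam_{n_k}}x,y\bigr)$, combined with the uniqueness of the asymptotic center of $\bigl(R_{\lam_{n_k}}x\bigr)$, forces $Fy=y$ --- we obtain $y\in\fix F$, whence $\fix F\neq\emptyset$.

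It remains to identify $\lim_{\lam\to\infty}R_\lam x$. Put $p\as P_{\fix F}(x)$ and let $\mu_k\to\infty$ be arbitrary; by $(\star)$ with $q=p$ the sequence $\bigl(R_{\mu_k}x\bigr)$ is bounded, so some subsequence satisfies $R_{\mu_{k_j}}x\rightharpoonup z$, and $z\in\fix F$ by the demiclosedness argument just used. Combining $(\star)$ with the $\Delta$-lower semicontinuity of the convex continuous functions $d(\cdot,p)$ and $d(x,\cdot)$ and with the inequality $d(x,p)\le d(x,z)$ characterizing the metric projection, we get
\[
 \limsup_j d\bigl(R_{\mu_{k_j}}x,p\bigr)^2\le d(x,p)^2-\liminf_j d\bigl(x,R_{\mu_{k_j}}x\bigr)^2\le d(x,p)^2-d(x,z)^2\le0,
\]
so $R_{\mu_{k_j}}x\to p$ in the metric of $\hs$; since $\bigl(\mu_k\bigr)$ was arbitrary, $\lim_{\lam\to\infty}R_\lam x=P_{\fix F}(x)$, which is \eqref{eq:kirklimit}. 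I expect the main work to lie in two places: extracting the exact constant in $(\star)$ from the CAT(0) inequality, and correctly invoking the two standard facts about Hadamard spaces on which the rest depends --- existence of $\Delta$-convergent subsequences of bounded sequences together with the demiclosedness of $I-F$, and the $\Delta$-lower semicontinuity of convex continuous functions; the remaining steps are routine.
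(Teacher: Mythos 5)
Your proof is correct, but it takes a genuinely different route from the one in the paper. The paper never invokes weak convergence in this argument: it places a Euclidean comparison triangle over $\triangle\l(x,FR_\lambda x,FR_\mu x\r)$ for $0<\mu<\lambda$ and extracts two facts, namely that $\lambda\mapsto d\l(x,R_\lambda x\r)$ is nondecreasing and that $d\l(R_\lambda x,R_\mu x\r)^2\le d\l(x,R_\lambda x\r)^2-d\l(x,R_\mu x\r)^2$; boundedness along a single sequence $\lambda_n\to\infty$ then forces the whole curve to be bounded and \emph{strongly} Cauchy, the limit is a fixed point by continuity, and a second comparison triangle, over $\triangle\l(x,p,FR_\mu x\r)$, yields precisely your inequality $(\star)$ and identifies the limit as $P_{\fix F}(x)$. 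You instead prove only the Fej\'er-type estimate $(\star)$ --- your derivation from the four-point inequality \eqref{eq:cat} together with the identities $d\l(x,FR_\lambda x\r)=\frac{1+\lambda}{\lambda}d\l(x,R_\lambda x\r)$ and $d\l(R_\lambda x,FR_\lambda x\r)=\frac1\lambda d\l(x,R_\lambda x\r)$ is correct --- and you obtain the existence of a fixed point from the $\Delta$-compactness of bounded sequences plus the demiclosedness argument (the same asymptotic-center argument the paper uses in the proofs of Proposition~\ref{prop:ppaf} and Theorem~\ref{thm:semigroupf}), finally upgrading to strong convergence to $P_{\fix F}(x)$ via the weak lower semicontinuity of $d(x,\cdot)$. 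The trade-off: the paper's argument is self-contained and delivers the Cauchy property of the curve directly, while yours imports three standard but nontrivial facts about $\Delta$-convergence (existence of weak cluster points, demiclosedness of $I-F$, weak lsc of convex continuous functions) in exchange for avoiding the two-parameter comparison computation; your route is essentially the classical Hilbert-space/Opial-style proof. Two routine points should be made explicit: the weak lower semicontinuity of $d(x,\cdot)$ is not proved in the paper and needs a citation, and the final passage from ``every sequence $\mu_k\to\infty$ admits a subsequence along which $R_{\mu_k}x\to p$'' to $\lim_{\lambda\to\infty}R_\lambda x=p$ requires the standard contradiction argument.
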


Our results are presented in Proposition~\ref{prop:ppaf} and Theorem 
\ref{thm:semigroupf} below. In Proposition~\ref{prop:ppaf}, 
we give an algorithm which finds a fixed point of~$F.$ 
It is a counterpart of Theorem~\ref{thm:ppa}. For a general form of this 
algorithm in Hilbert spaces, see \cite[Theorem~23.41]{bauschke-combettes}. 
See also \cite[Theorem 2.6]{bruck-reich}, \cite[Corollary 7.10]{reich-shafrir}
and \cite[Theorem 4.7]{shafrir93}. The best result in Hadamard spaces 
works only with $\lam_n=\lam>0;$ see \cite[Theorem 6.4]{seville}.
\begin{prop}\label{prop:ppaf}
Let $F\map$ be a~nonexpansive mapping with at least one fixed point and let $\l(\lam_n\r)\subset(0,\infty)$ be a sequence satisfying $\sum_n\lam_n^2=\infty.$ Given a point $x_0\in \hs,$ put
\begin{equation} \label{eq:ppaf}
 x_n\as R_{\lam_n} x_{n-1} ,\qquad n\in\nat.
\end{equation}
Then the sequence $\l(x_n\r)$ converges weakly to a fixed point of $F.$
\end{prop}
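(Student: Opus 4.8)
The plan is to follow the standard two-ingredient scheme for weak convergence of iterative sequences in Hadamard spaces: first show that the sequence is \emph{Fej\'er monotone} with respect to $\fix F$ (so that $d(x_n,p)$ converges for every $p\in\fix F$, and in particular $(x_n)$ is bounded), and then show that every weak cluster point of $(x_n)$ lies in $\fix F$; combined with the fact that a Fej\'er-monotone sequence with all its weak cluster points in the target set converges weakly, this yields the conclusion. The key quantitative identity is the relation between $R_\lam x$ and $F$: if $x_n=R_{\lam_n}x_{n-1}$, then by definition of the resolvent $x_n=\frac1{1+\lam_n}x_{n-1}+\frac{\lam_n}{1+\lam_n}F x_n$, so $x_n$ is the point on the geodesic from $x_{n-1}$ to $F x_n$ dividing it in ratio $\lam_n:1$. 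Using the characterization of points on geodesics in CAT(0) spaces (the inequality $d(z,(1-t)a\oplus tb)^2\le(1-t)d(z,a)^2+td(z,b)^2-t(1-t)d(a,b)^2$), applied with $z=p\in\fix F$ and $Fp=p$, together with nonexpansiveness of $F$, one obtains an estimate of the form
\begin{equation*}
d(x_n,p)^2\le d(x_{n-1},p)^2-c\,\lam_n^2\,d(x_n,F x_n)^2
\end{equation*}
(after absorbing lower-order terms; the precise constant and the exact power of $(1+\lam_n)$ in the denominator will need care). This simultaneously gives Fej\'er monotonicity and, upon summing over $n$ and using $\sum_n\lam_n^2=\infty$, forces $\liminf_n d(x_n,F x_n)=0$.

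Next I would promote this $\liminf$ to control over weak cluster points. Since $(x_n)$ is bounded, it has a weak cluster point $\bar x$ (Hadamard spaces are weakly sequentially compact on bounded sets). Extract a subsequence $(x_{n_k})$ with $d(x_{n_k},F x_{n_k})\to 0$ and $x_{n_k}\rightharpoonup \bar x$. Then I would invoke a \emph{demiclosedness} property of $I-F$ (i.e., of nonexpansive maps) in Hadamard spaces: if $x_{n_k}\rightharpoonup\bar x$ and $d(x_{n_k},Fx_{n_k})\to0$, then $\bar x\in\fix F$. This demiclosedness principle is available in the Hadamard-space literature (it follows from the Opial-type property of the asymptotic center / the variational characterization of weak convergence); I would cite it rather than reprove it. A subtlety is that one only has $\liminf$, not $\lim$, of $d(x_n,Fx_n)$ along the whole sequence, so one must be slightly careful that \emph{every} weak cluster point is a fixed point — but this is fine, since any weak cluster point arises as a weak limit of some subsequence, and along any subsequence converging weakly to a cluster point one can further pass to a sub-subsequence realizing a small value of $d(x_n,Fx_n)$ (because $d(x_n,Fx_n)\to0$ will in fact hold along a suitable subsequence of any subsequence, using that $d(x_n,Fx_n)\to 0$ in the Ces\`aro/subsequential sense forced by summability — alternatively, one can first upgrade $\liminf$ to $\lim$ by a monotonicity argument if the estimate is strong enough).

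The main obstacle I anticipate is getting the decisive inequality $d(x_n,p)^2\le d(x_{n-1},p)^2 - c\,\lam_n^2\,d(x_n,Fx_n)^2$ with the correct power $\lam_n^2$ (not $\lam_n$), since this is exactly what matches the hypothesis $\sum_n\lam_n^2=\infty$ and is the reason the summability condition here is weaker-looking than $\sum\lam_n=\infty$ in Theorem~\ref{thm:ppa}. Carrying out the CAT(0) geodesic computation, one finds a term like $\tfrac{\lam_n}{(1+\lam_n)^2}d(x_{n-1},Fx_n)^2$, and one must relate $d(x_{n-1},Fx_n)$ to $d(x_n,Fx_n)$ — note $d(x_{n-1},x_n)=\tfrac{\lam_n}{1+\lam_n}d(x_{n-1},Fx_n)$, so $d(x_{n-1},Fx_n)=(1+\lam_n)\,d(x_{n-1},x_n)\ge\lam_n\,d(x_{n-1},x_n)$, and separately $d(x_{n-1},Fx_n)\ge d(x_n,Fx_n)-d(x_{n-1},x_n)$; combining these to extract a clean $\lam_n^2\,d(x_n,Fx_n)^2$ lower bound (possibly after distinguishing the cases $\lam_n$ large and $\lam_n$ small, or using $d(x_n,Fx_n)\le(1+\tfrac1{\lam_n})d(x_{n-1},x_n)$) is the technical heart of the argument. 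Once that inequality is in hand, telescoping and the demiclosedness principle finish the proof routinely.
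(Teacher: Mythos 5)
Your overall skeleton matches the paper's proof (Fej\'er monotonicity via nonexpansiveness of $R_{\lam_n}$, then showing every weak cluster point is fixed, then Proposition~\ref{prop:fejer}(iii)), and the decisive inequality you are after does come out cleanly: applying \eqref{eq:cat} to the geodesic from $x_{n-1}$ to $Fx_n$ at parameter $\lam_n/(1+\lam_n)$, with $d(p,Fx_n)\le d(p,x_n)$ and $d(x_{n-1},Fx_n)=(1+\lam_n)d(x_n,Fx_n)$, gives exactly $d(p,x_n)^2\le d(p,x_{n-1})^2-\lam_n^2\,d(x_n,Fx_n)^2$ with constant $1$, so the technical worry you flag about powers of $(1+\lam_n)$ is not where the difficulty lies.

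The genuine gap is in the step from $\sum_n\lam_n^2\,d(x_n,Fx_n)^2<\infty$ to ``every weak cluster point is fixed.'' Your primary fix --- that along any weakly convergent subsequence one can pass to a further subsequence on which $d(x_n,Fx_n)$ is small --- is false: the summability only forces $d(x_n,Fx_n)\to0$ along indices carrying most of the mass of $\sum\lam_n^2$, and a weak cluster point could arise along a sparse subsequence $S$ with $\sum_{n\in S}\lam_n^2<\infty$ on which $d(x_n,Fx_n)$ stays bounded away from $0$. What actually closes the gap (and is the reason $\sum\lam_n^2=\infty$ suffices) is the observation, which you mention only as a speculative alternative and do not prove, that $n\mapsto d(x_n,Fx_n)$ is \emph{nonincreasing}. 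This follows from a short triangle-inequality computation: since $x_{n+1}$ lies on the geodesic from $x_n$ to $Fx_{n+1}$, one has $d(x_n,x_{n+1})+d(x_{n+1},Fx_{n+1})=d(x_n,Fx_{n+1})\le d(x_n,Fx_n)+d(Fx_n,Fx_{n+1})\le d(x_n,Fx_n)+d(x_n,x_{n+1})$, whence $d(x_{n+1},Fx_{n+1})\le d(x_n,Fx_n)$. With this monotonicity the telescoped bound yields $\bigl(\sum_{n=1}^m\lam_n^2\bigr)d(x_m,Fx_m)^2\le d(p,x_0)^2$, so $d(x_m,Fx_m)\to0$ along the \emph{whole} sequence, and your demiclosedness/asymptotic-center argument then applies to every weak cluster point. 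Without supplying this monotonicity lemma the proof is incomplete.
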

Note that the assumption $\sum_n\lam_n^2=\infty$ also appears in Hilbert spaces; see 
\cite[Theorem~23.41]{bauschke-combettes}.

We also study the asymptotic behavior of the nonlinear semigroup defined 
in \eqref{eq:exsemf}. A~Hilbert ball version of this result 
appears in~\cite{reich91}.
\begin{thm} \label{thm:semigroupf}
 Let $F\map$ be a nonexpansive mapping with at least one fixed point 
and let $x_0\in\hs.$ Then $T_t x_0$ converges weakly to a fixed point 
of $F$ as $t\to\infty.$
\end{thm}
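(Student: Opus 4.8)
The plan is to transfer the asymptotic behavior of the discrete proximal point iteration in Proposition~\ref{prop:ppaf} to the continuous semigroup $\l(T_t\r)$ by a discretization/comparison argument, following the strategy used to pass from Theorem~\ref{thm:ppa} to Theorem~\ref{thm:flow} in~\cite{ppa}. First I would record the basic structural facts: since $F$ has a fixed point $p$, the resolvent identity $R_\lam p=p$ holds (as $p$ is the unique fixed point of $G_{p,\lam}$), hence $T_t p=p$ for all $t\ge0$; moreover each $T_t$ is nonexpansive, so $d\l(T_t x_0,p\r)$ is nonincreasing in $t$ and in particular the curve $\l(T_t x_0\r)_{t\ge0}$ is bounded. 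By the defining limit~\eqref{eq:exsemf}, for each fixed $t$ we also have $d\l(R_{t/n}^{\,n}x_0,p\r)\le d\l(x_0,p\r)$, so all the finite PPA iterates that approximate $T_t x_0$ stay in a fixed ball around $p$.

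The heart of the matter is a Fejér-monotonicity statement relative to $\fix F$: I want to show that for every $q\in\fix F$ the function $t\mapsto d\l(T_t x_0,q\r)$ is nonincreasing, and — crucially — that the ``asymptotic regularity'' type estimate forces convergence. Concretely, I would fix $q\in\fix F$ and use the semigroup property together with nonexpansiveness to see $d\l(T_{t+s}x_0,q\r)\le d\l(T_s x_0,q\r)$, so $r(q)\as\lim_{t\to\infty}d\l(T_t x_0,q\r)$ exists. To pin down the limit point I would invoke the notion of weak convergence in Hadamard spaces used for Theorems~\ref{thm:ppa}--\ref{thm:flow} (via the asymptotic center / the fact that a bounded sequence has a unique asymptotic center, and weak convergence means every subsequence has the same asymptotic center). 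Thus it suffices to prove: (a) every weak cluster point of $\l(T_t x_0\r)$ as $t\to\infty$ lies in $\fix F$; and (b) there is only one such cluster point. For (b) one uses the standard argument that if $w_1,w_2$ are two weak cluster points in $\fix F$, then $r(w_1)$ and $r(w_2)$ exist, and a parallelogram-type (CAT(0)) inequality comparing $d\l(T_t x_0,w_1\r)$ and $d\l(T_t x_0,w_2\r)$ along the geodesic $[w_1,w_2]$ forces $w_1=w_2$; this is exactly the uniqueness mechanism already employed in~\cite{ppa}, so I would cite or mimic it.

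For (a), the main work is to show that weak cluster points of the flow are fixed points of $F$. Here I would exploit the relation between the semigroup and its resolvents: by a standard estimate (of the form $d\l(T_t x_0, R_\lam T_t x_0\r)\to 0$, or equivalently that the flow is asymptotically close to being fixed under $R_\lam$), together with the fact that $R_\lam$ is nonexpansive with $\fix R_\lam=\fix F$, one concludes that any weak cluster point $w$ satisfies $R_\lam w=w$ and hence $Fw=w$. The cleanest route is probably to run the discrete comparison: choose a step size $h>0$, compare $T_{nh}x_0$ with the PPA sequence $y_n\as R_h y_{n-1}$, $y_0=x_0$, using the nonexpansiveness chain $d\l(T_{nh}x_0,y_n\r)\le d\l(T_{nh}x_0,R_h T_{(n-1)h}x_0\r)+d\l(R_h T_{(n-1)h}x_0,y_n\r)$ and a Chernoff/Brezis--Pazy type estimate $d\l(T_{h}x,R_h x\r)=o(h)$ or $O(h^{2})$ uniformly on bounded sets; summing and using $\sum h^2=\infty$ when $nh\to\infty$ lets Proposition~\ref{prop:ppaf} do the work. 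I expect the technical heart — and the main obstacle — to be precisely this quantitative comparison between the continuous flow and its Euler/resolvent discretization in the nonlinear CAT(0) setting, i.e.\ controlling $d\l(T_h x, R_h x\r)$ well enough that the error accumulated over $\sim 1/h$ steps stays summable in the sense needed, since in Hadamard spaces one cannot rely on linearity and must argue through convexity of the metric and the contraction estimates for $G_{x,\lam}$ and $R_\lam$. Once that estimate is in place, weak convergence of $\l(y_n\r)$ to a fixed point of $F$ from Proposition~\ref{prop:ppaf} transfers to $\l(T_{nh}x_0\r)$, and strong continuity of $\l(T_t\r)$ upgrades this to weak convergence of the full curve $T_t x_0$ as $t\to\infty$.
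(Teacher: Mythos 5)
Your overall skeleton is right and matches the paper's final step: Fej\'er monotonicity of $t\mapsto T_t x_0$ with respect to $\fix F$, showing every weak cluster point is a fixed point, and then invoking the uniqueness machinery (Proposition~\ref{prop:fejer}\eqref{item:iii}) to get weak convergence. However, the step you yourself flag as ``the technical heart'' is a genuine gap, and the route you propose for it does not work. Everything reduces to the asymptotic regularity statement $d\l(T_t x_0, FT_t x_0\r)\to 0$ as $t\to\infty$ (your intermediate claim $d\l(T_t x_0, R_\lam T_t x_0\r)\to0$ is equivalent to it via \eqref{eq:resestim} and the identity $d\l(R_\lam y,FR_\lam y\r)=\frac1\lam d\l(y,R_\lam y\r)$), and you never actually prove it -- you only assert it as ``a standard estimate.''

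The discretization you suggest cannot supply it. A Chernoff/Brezis--Pazy type bound controls $d\l(T_{nh}x,R_h^{\,n}x\r)$ for \emph{fixed total time} $t=nh$ as $h\to0$; typical bounds are of order $h\sqrt{n}\,d(x,Fx)$ or $n\cdot O(h^2)$. If you instead fix the step size $h$ and let $n\to\infty$ (which is what you need, since you want $t=nh\to\infty$), the accumulated error $\sum_{k=1}^{n} d\l(T_h T_{(k-1)h}x_0,\,R_h T_{(k-1)h}x_0\r)$ is of order $n\cdot O(h^2)$ and diverges; there is no mechanism forcing $d\l(T_{nh}x_0,y_n\r)\to0$, so weak convergence of the PPA sequence $(y_n)$ from Proposition~\ref{prop:ppaf} does not transfer to $(T_{nh}x_0)$. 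The paper avoids this entirely and proves asymptotic regularity directly, adapting Reich's Hilbert-ball argument: one first shows $t\mapsto d\l(T_t x,FT_t x\r)$ is nonincreasing (so the limit exists), then derives the integral inequality $d\l(T_s x,T_t x\r)\le\int_s^t d\l(T_r x,FT_r x\r)\,\di r$ and, by iterating the fixed-point equation for $R_\lam$, the key differential-type inequality \eqref{eq:limitestim}; combining these shows the limit equals $\limsup_{h\to\infty}\frac1h d\l(T_h x,x\r)$, which is independent of $x$ and hence zero upon taking $x\in\fix F$. You would need to either reproduce an argument of this kind or find a genuinely uniform-in-$n$ comparison estimate, which is not available in this generality.
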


As noted in \cite[Remark, page 7]{bbr}, there exists a counterexample in 
Hilbert space showing that the convergence in Theorem \ref{thm:semigroupf} is not strong in general. This counterexample is based on an earlier work of J.-B. Baillon~\cite{baillon}.

In Section~\ref{sec:heat}, we apply Proposition~\ref{prop:ppaf} and Theorem~\ref{thm:semigroupf} to harmonic mapping theory in singular spaces and obtain the convergence of a heat flow to a solution to a Dirichlet problem under very mild assumptions. To this end, we construct discrete and continuous heat flows by~\eqref{eq:ppaf} and~\eqref{eq:exsemf}, respectively, with $F$ being the nonlinear Markov operator. To the best of our knowledge, these constructions are new and complement the existing approaches, for instance, the gradient flow of the energy functional.
 


\section{Preliminaries} \label{sec:pre}

In this section we recall several basic definitions and facts regarding 
Hadamard spaces. 
More information can be found in the books \cite{mybook,bh,jost2}.

Throughout the paper, the space $(\hs,d)$ is Hadamard, that is, it is a 
complete geodesic metric space satisfying
\begin{equation} \label{eq:cat}
d\l(x,\gamma_t\r)^2\leq 
(1-t)d\l(x,\gamma_0\r)^2+td\l(x,\gamma_1\r)^2
-t(1-t)d\l(\gamma_0,\gamma_1\r)^2
\end{equation}
for any $x\in \hs,$ any geodesic $\gamma\geo,$ and any $t\in[0,1].$ Given a 
closed and convex set $C\subset\hs$ and a point $x\in\hs,$ there exists a 
unique point $c\in C$ such that
\begin{equation*}
d(x,c) = d(x, C) := \inf_{y\in C} d(x,y).
\end{equation*}
We denote this point $c$ by $P_Cx$ and call the mapping 
$P_C\col\hs\to C$ the \emph{metric projection} of $\hs$ onto the set $C.$

Given a bounded sequence $(x_n)\subset\hs,$ put
\begin{equation}\label{eq:omega}
\omega\l(x;\l(x_n\r)\r)\as\limsup_{n\to\infty} d\l(x,x_n\r)^2,\qquad x\in \hs.
\end{equation}
Then the function $\omega$ defined in~\eqref{eq:omega} has a unique minimizer, which we call 
the \emph{asymptotic center} of the sequence $(x_n).$ We shall say that $\l(x_n\r)\subset 
\hs$ 
\emph{weakly converges} to a point $x\in\hs$ if $x$ is the asymptotic center of each subsequence of $\l(x_n\r).$ We use the notation $x_n\wto x.$ Clearly, if $x_n\to x,$ then $x_n\wto x.$ If there is a subsequence $\l(x_{n_k}\r)$ of $\l(x_n\r)$ such that $x_{n_k}\wto z$ for some $z\in \hs,$ we say that $z$ is a \emph{weak cluster point} of the sequence $\l(x_n\r).$ 

We say that a sequence $\l(x_n\r)\subset\hs$ is \emph{Fej\'er monotone} with respect to a set $A\subset\hs$ if
\begin{equation*}
d\l(a,x_{n+1}\r)\le  d\l(a,x_n\r)
\end{equation*}
for each $a\in A$ and $n\in\nat.$

\begin{prop}\cite[Proposition 3.3]{apm}
Let $C\subset \hs$ be a closed convex set. Assume that $(x_n)\subset \hs$ 
is a~Fej\'er monotone sequence with respect to $C.$ Then we have:
\begin{enumerate}
\item $(x_n)$ is bounded. \label{item:i}
\item $d(x_{n+1},C) \leq d(x_n,C)$ for each $n\in\nat.$ \label{item:ii}
\item $(x_n)$ weakly converges to some $x\in C$ if and only if all weak cluster points of $(x_n)$ belong to $C.$ \label{item:iii}
\item $(x_n)$ converges to some $x\in C$ if and only if $d(x_n,C)\to 0.$ \label{item:iv}
\end{enumerate}
\label{prop:fejer}
\end{prop}

For each $\lam>0$ and $x\in \hs,$ we have $R_\lam x=x$ if and only if $Fx=x.$ Furthermore, we have the following estimate.
\begin{lem}\cite[Lemma 3.4]{stojkovic} 
 Let $F\map$ be a nonexpansive mapping. Then its resolvent satisfies
\begin{equation} \label{eq:resestim}
d\l(x,R_\lam x\r)\leq \lam d(x,Fx)
\end{equation}
for every $\lam\in(0,\infty).$
\end{lem}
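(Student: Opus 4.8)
The plan is to argue directly from the fixed-point characterization of the resolvent, using only the triangle inequality and the nonexpansiveness of $F$. Write $z\as R_\lam x$. By definition, $z$ is the unique fixed point of the strict contraction $G_{x,\lam}$, so that
\[
z = \frac1{1+\lam}x + \frac{\lam}{1+\lam}Fz.
\]
The first step is to unwind this identity geometrically: the right-hand side is shorthand for the point on the geodesic segment from $x$ to $Fz$ at parameter $\frac{\lam}{1+\lam}$ measured from $x$. Consequently $z$ lies on $[x,Fz]$ and
\[
d(x,z) = \frac{\lam}{1+\lam}\, d(x,Fz).
\]

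The second step estimates $d(x,Fz)$ from above. By the triangle inequality and the fact that $F$ is nonexpansive (hence $1$-Lipschitz),
\[
d(x,Fz) \leq d(x,Fx) + d(Fx,Fz) \leq d(x,Fx) + d(x,z).
\]
Substituting this bound into the equality of the previous display gives
\[
d(x,z) \leq \frac{\lam}{1+\lam}\l[d(x,Fx) + d(x,z)\r].
\]
Moving the term $\frac{\lam}{1+\lam}d(x,z)$ to the left and using $1-\frac{\lam}{1+\lam}=\frac1{1+\lam}$, one obtains $\frac1{1+\lam}d(x,z)\leq\frac{\lam}{1+\lam}d(x,Fx)$, that is, $d(x,z)\leq\lam\, d(x,Fx)$. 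This is exactly \eqref{eq:resestim}.

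There is essentially no hard part here; once the geometry is unwound the whole argument is three lines. The only point that demands care, and the one I would treat as the (mild) main obstacle, is the very first step: justifying that the weighted-average notation $\frac1{1+\lam}x+\frac{\lam}{1+\lam}Fz$ denotes the point of the geodesic $[x,Fz]$ whose distance from $x$ equals $\frac{\lam}{1+\lam}d(x,Fz)$. This is purely a matter of fixing the convention for convex combinations along geodesics in an Hadamard space. Notably, no curvature inequality such as \eqref{eq:cat} is needed: the entire estimate runs on the triangle inequality and the nonexpansiveness of $F$ alone.
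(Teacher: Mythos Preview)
Your proof is correct. It takes a different route from the paper's argument, though both are short.

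The paper proves the bound via the standard Banach iteration estimate: starting from~$x$ and iterating the contraction $G_{x,\lam}$, one has
\[
d\l(x,R_\lam x\r)\leq\sum_{n=1}^\infty d\l(G_{x,\lam}^{n-1}(x),G_{x,\lam}^{n}(x)\r)\leq d\l(x,G_{x,\lam}(x)\r)\sum_{n=1}^\infty\l(\frac{\lam}{1+\lam}\r)^{n-1}=(1+\lam)\,d\l(x,G_{x,\lam}(x)\r),
\]
and then computes $d\l(x,G_{x,\lam}(x)\r)=\frac{\lam}{1+\lam}d(x,Fx)$. You instead work directly with the fixed-point identity $z=G_{x,\lam}(z)$, read off $d(x,z)=\frac{\lam}{1+\lam}d(x,Fz)$, and close the loop with a single application of the triangle inequality plus nonexpansiveness of~$F$. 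Your argument is slightly more economical (no geometric series, no limiting procedure); the paper's has the virtue of being the generic contraction-mapping template, applicable without inspecting the particular form of $G_{x,\lam}$. Both rely on the same geodesic-convex-combination convention you flagged, and neither uses the curvature inequality~\eqref{eq:cat}.
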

\begin{proof} Since $R_\lam x$ is a fixed point of the strict contraction $G_{x,\lam},$ 
it can be iteratively approximated by the Banach contraction principle. 
Therefore
 \begin{align*}
  d\l(x,R_\lam x\r) &\leq \sum_{n=1}^\infty 
  d\l(G_{x,\lam}^{n-1}(x),G_{x,\lam}^{n}(x) \r) \\ &\leq 
  d\l(x,G_{x,\lam}(x) \r) \sum_{n=1}^\infty \l(\frac{\lam}{1+\lam} \r)^{n-1} 
  \\ & \leq (1+\lam)d\l(x,G_{x,\lam}(x) \r)
 \end{align*}
and we are done because the right-hand side is equal to $\lam d(x,Fx).$
\end{proof}
Consequently,
\begin{equation*}
d\l(x,R_{\frac{t}{n}}^{n}x\r)\leq\sum_{j=0}^{n-1} d\l(R_{\frac{t}{n}}^{j}x,
R_{\frac{t}{n}}^{j+1}x\r) \leq nd\l(x,R_{\frac{t}{n}}x\r)=td(x,Fx)
\end{equation*}
and taking the limit on the left-hand side as $n\to\infty$, we obtain
\begin{equation} \label{eq:resestim2}
d\l(x,T_tx\r)\leq t d(x,Fx).
\end{equation}



\section{Proof of Theorem~\ref{thm:kirk}}  \label{sec:kirkproof}

\begin{proof}[Proof of Theorem~\ref{thm:kirk}]
To simplify our notation, put $x_\lam\as R_\lam x$ for each $\lam\in(0,\infty).$ Fix now $0<\mu<\lam$ and let \begin{equation*}\ol{\triangle}\l(\ol{x},\ol{Fx_\lam},\ol{Fx_\mu}\r)\subset\eucl^2\end{equation*}
be a comparison triangle for $\triangle\l(x,Fx_\lam,Fx_\mu\r).$ We have
\begin{equation*}
\l\|\ol{Fx_\lam}-\ol{Fx_\mu}\r\|  = d\l(Fx_\lam,Fx_\mu\r)  \leq d\l(x_\lam,x_\mu\r)  \leq \l\| \ol{x_\lam}-\ol{x_\mu} \r\|. 
\end{equation*}
Without loss of generality we may assume that $\ol{x}=0\in\rls^2.$ From 
this 
and the fact that $\ol{x_\lam}=\frac{\lam}{1+\lam} \ol{F x_\lam}$ and $\ol{x_\mu}=\frac{\mu}{1+\mu}\ol{Fx_\mu}$ we further obtain
\begin{equation*}\l\langle \frac{1+\lam}{\lam}\ol{x_\lam}-\frac{1+\mu}{\mu}\ol{x_\mu},
\frac{1+\lam}{\lam}\ol{x_\lam}-\frac{1+\mu}{\mu}\ol{x_\mu} \r\rangle 
\leq  \l\| \ol{x_\lam}-\ol{x_\mu} \r\|^{2}.\end{equation*}
A simple computation yields
\begin{align*}
\l(\frac{1+\lam}{\lam}-\frac{1+\mu}{\mu}\r)^2\l\|\ol{x_\mu} \r\|^{2} 
& +\l(\frac{(1+\lam)^2}{\lam^2}-1\r) \l\|\ol{x_\lam}-\ol{x_\mu} \r\|^{2} \\ 
& \leq 2\l(\frac{1+\mu}{\mu}-\frac{1+\lam}{\lam} \r)\frac{1+\lam}{\lam}\l\langle \ol{x_\mu},
\ol{x_\lam}-\ol{x_\mu}\r\rangle.
\end{align*}
Consequently,
\begin{equation*} \l\langle \ol{x_\mu},\ol{x_\lam}-\ol{x_\mu}\r\rangle\geq0.\end{equation*}
Since
\begin{equation*} \l\|\ol{x_\lam} \r\|^2 =\l\|\ol{x_\mu} \r\|^2+ \l\|\ol{x_\lam}-\ol{x_\mu} \r\|^2+2\l\langle \ol{x_\mu},\ol{x_\lam}-\ol{x_\mu}\r\rangle,\end{equation*}
we have
\begin{equation*}\l\|\ol{x_\mu} \r\|\leq \l\|\ol{x_\lam} \r\|\end{equation*}
and
\begin{equation} \label{eq:adhockirk}
d\l(x_\lam,x_\mu\r)^2 \leq \l\|\ol{x_\lam}-\ol{x_\mu} \r\|^2\leq \l\|\ol{x_\lam}-\ol{x} \r\|^2-\l\|\ol{x_\mu}-\ol{x} \r\|^2 .
\end{equation}
The monotonicity of $\lam\mapsto\l\|\ol{x_\lam} \r\|$ and the boundedness 
of the sequence $\{\ol{x_{\lam_n}}\}$ yield the boundedness of the curve 
$\l(x_\lam\r)_{\lam\in(0,\infty)}$. 
Inequality~\eqref{eq:adhockirk} therefore implies that
\begin{equation*} d\l(x_\lam,x_\mu\r)^2\to0 \quad
\text{as }\lam,\mu\to\infty.\end{equation*}
Let $z\in\hs$ be the limit point of $(x_\lam).$ Using continuity, we obtain
\begin{equation*}d(z,Fz)=\lim_{\lam\to\infty} d\l(x_\lam, Fx_\lam\r)
=\lim_{\lam\to\infty} \frac1{1+\lam} d\l(x, Fx_\lam\r)=0,\end{equation*}
which means that $z\in\fix F.$ 

We will now show that $z= P_{\fix F}(x).$ Let $p\in\fix F$ be an arbitrary fixed point 
of $F$ and repeat 
the above argument with the triangle $\triangle\l(x,p,Fx_\mu\r).$ We obtain
\begin{equation*}
d(x,p)^2 = \l\|\ol{x}-\ol{p} \r\|^2 \geq \l\|\ol{x}-\ol{x_\mu} \r\|^2
+\l\|\ol{x_\mu}-\ol{p} \r\|^2 \geq d\l(x,x_\mu\r)^2+d\l(x_\mu,p\r)^2
\end{equation*}
and after taking the limit on the right-hand side as $\mu\to\infty,$ we arrive at 
\begin{equation*} d(x,p)^2\geq d(x,z)^2+d\l(z,p\r)^2,\end{equation*}
which completes the proof that $z= P_{\fix F}(x).$

Finally, it is easy to see that if $\fix F\neq\emptyset,$ then $\l(x_\lam\r)_{\lam\in(0,\infty)}$ is bounded.
\end{proof}



\section{Proof of Proposition~\ref{prop:ppaf}}  \label{sec:ppafproof}

\begin{proof}[Proof of Proposition~\ref{prop:ppaf}]
Let $x\in \hs$ be a fixed point of $F.$ Then, for each $n\in\nat,$ we have
\begin{equation*}
d\l(x_{n-1},x\r)\geq d\l(R_{\lam_n} x_{n-1},R_{\lam_n} x\r)=d\l(x_n,x\r),
\end{equation*}
which verifies the Fej\'er monotonicity of $\l(x_n\r)$ with respect to $\fix F.$ Put
\begin{equation*}
\beta_n\as\frac1{1+\lam_n}.
\end{equation*}
Inequality~\eqref{eq:cat} yields
\begin{align*}
 d\l(x,x_n\r)^2 &\leq \beta_n d\l(x,x_{n-1}\r)^2 + (1-\beta_n) d\l(x,Fx_n\r)^2 \\ & \quad -\beta_n(1-\beta_n)d\l(x_{n-1},Fx_n\r)^2 \\
& \leq\beta_n d\l(x,x_{n-1}\r)^2+(1-\beta_n) d\l(x,x_n\r)^2-\beta_n d\l(x_{n-1},x_n\r)^2,
\end{align*}
which gives
\begin{align}
d\l(x_{n-1},x_n\r)^2 &\leq d\l(x,x_{n-1}\r)^2-d\l(x,x_n\r)^2 \nonumber \\
\intertext{and hence}
\lam_n^2\frac{d\l(x_{n-1},x_n\r)^2}{\lam_n^2} & \leq d\l(x,x_{n-1}\r)^2-d\l(x,x_n\r)^2.  \label{eq:ppaestim}
\end{align}
By the triangle inequality, we have
\begin{align*}
d\l(x_n,x_{n+1}\r)+d\l(x_{n+1},Fx_{n+1}\r) &= d\l(x_n,Fx_{n+1}\r) \\ &\leq d\l(x_n,Fx_n\r)
+d\l(Fx_n,Fx_{n+1}\r) \\ & \leq d\l(x_n,Fx_n\r)+d\l(x_n,x_{n+1}\r)
\end{align*}
and therefore
\begin{equation} \label{eq:ppamono}
 \frac{d\l(x_{n},x_{n+1}\r)}{\lam_{n+1}} =d\l(x_{n+1},Fx_{n+1}\r)
\leq d\l(x_n,Fx_n\r)=\frac{d\l(x_{n-1},x_{n}\r)}{\lam_n}. 
\end{equation}
Summing up \eqref{eq:ppaestim} over $n=1,\dots,m,$ where $m\in\nat,$ 
and using \eqref{eq:ppamono}, we obtain 
\begin{equation*}
\biggl(\sum_{n=1}^m \lam_n^2\biggr) \frac{d\l(x_{m-1},x_m\r)^2}{\lam_m^2}\leq d\l(x,x_0\r)^2-d\l(x,x_m\r)^2.
\end{equation*}
Hence 
\begin{equation*}
d\l(x_m,Fx_m\r)=\frac{1}{\lam_m}d\l(x_{m-1},x_m\r)\to 0 \qquad\text{as } m\to\infty.
\end{equation*}
Assume now that $z\in\hs$ is a weak cluster point of $\l(x_n\r).$ Then
\begin{align*}
\limsup_{n\to\infty} d\l(Fz,x_n\r) & \leq \limsup_{n\to\infty} \l[d\l(Fz,Fx_n\r)+ d\l(Fx_n,x_n\r)\r], \\
& \leq \limsup_{n\to\infty} d\l(z,x_n\r)+0.
\end{align*}
By the uniqueness of the weak limit, we get $z=Fz.$ Finally, we apply 
Proposition~\ref{prop:fejer}\eqref{item:iii} to conclude that the sequence $\l(x_n\r)$ 
weakly converges to a fixed point of~$F.$ 
\end{proof}



\section{Proof of Theorem~\ref{thm:semigroupf}}  \label{sec:semigroupfproof}

\begin{proof}[Proof of Theorem~\ref{thm:semigroupf}]
We mimic the technique from \cite{reich91} and adapt it 
to our situation. Let $x\in\hs.$ First observe that
\begin{equation*}
d\l(R_\lam x, FR_\lam x\r)=\frac1\lam d\l(x,R_\lam x\r)\leq d(x,Fx)
\end{equation*}
by~\eqref{eq:resestim}. Hence we have
\begin{equation*}
 d(x,Fx) \geq  d\l(R_{\frac{t}{n}} x, FR_{\frac{t}{n}} x\r)
\geq d\l(R_{\frac{t}{n}}^{n} x, FR_{\frac{t}{n}}^{n} x\r)
\end{equation*}
and after taking the limit on the right-hand side as $n\to\infty$, we also obtain 
\begin{equation*}
 d(x,Fx)\geq d\l(T_t x, FT_tx\r).
\end{equation*}
The semigroup property implies (when we substitute $x\as T_s x$ and $t\as 
t-s$ in the above inequality) that 
\begin{equation*}
  d\l(T_s x, FT_sx\r)\geq d\l(T_t x, FT_tx\r),
\end{equation*}
whenever $s\leq t$ and therefore the limit
\begin{equation} \label{eq:limit}
 \lim_{t\to\infty} d\l(T_t x, FT_tx\r)
\end{equation}
 exists. We will now show that this limit actually equals $0.$ 
 
Let $0\leq s\leq t.$ Then inequality \eqref{eq:resestim2} yields 
\begin{align*}
d\l(T_s x, T_t x\r)  & \leq \sum_{j=0}^{n-1} d\l(T_{s+\frac{j}{n}(t-s)} x, 
T_{s+\frac{j+1}{n}(t-s)} x\r) \\ & \leq\frac{t-s}{n} \sum_{j=0}^{n-1} 
d\l(T_{s+\frac{j}{n}(t-s)} x, F T_{s+\frac{j}{n}(t-s)} x\r)
\end{align*} 
and after letting $n\to\infty,$ we obtain
\begin{equation} \label{eq:semiestim}
d\l(T_s x, T_t x\r)  \leq \int_s^t d\l(T_r x, F T_r x\r) \di r.
\end{equation} 
Next we prove that
\begin{equation} \label{eq:limitestim}
\lim_{t\to\infty} d\l(T_t x,F T_t x\r)\leq \frac1h \lim_{t\to\infty} d\l(T_{t+h} x, T_t x\r).
\end{equation} 
To this end, we repeatedly use the inequality
\begin{align*}
d\l(FR_\lam^{n} x, R_\lam^{n-k+1}x\r) & \leq\frac{1}{1+\lam} d\l(FR_\lam^{n} x, R_\lam^{n-k}x\r) \\ &\quad +\frac{\lam}{1+\lam} d\l(R_\lam^{n} x, R_\lam^{n-k+1}x\r) ,
\end{align*}
which is valid for each $1\leq k \leq n$, to obtain
\begin{align*}
d\l(FR_\lam^{n} x, R_\lam^{n}x\r) & \leq \frac1{(1+\lam)^n} d\l(FR_\lam^{n} x, x\r) \\ & \quad +\lam\sum_{j=1}^n \frac{1}{(1+\lam)^j} d\l(R_\lam^{n} x, R_\lam^{n-j+1}x\r) .
\end{align*}
Put now $\lam\as\frac{t}{n}$ and take the limits on both sides of this inequality as 
$n\to\infty.$ One arrives at
\begin{equation*}
d\l(T_t x,F T_t x\r)\leq \int_0^t e^{-r} d\l(T_t x, T_{t-r} x\r)\di r + e^{-t} d\l( FT_tx,x\r).
\end{equation*}
Applying inequality \eqref{eq:semiestim} and an elementary calculation, 
we arrive at
\begin{equation*}
e^t d\l(T_t x,F T_t x\r) \leq \int_0^t\l(e^r-1\r) d\l(T_r x,F T_r x\r) \di r
+ d\l(FT_tx,x\r)
\end{equation*}
or
\begin{equation*}
\l(e^t-1\r) d\l(T_t x,F T_t x\r) \leq \int_0^t\l(e^r-1\r) d\l(T_r x,F T_r x\r) \di r+d\l(T_tx,x\r).
\end{equation*}
Replacing $t$ by $h$ and then $x$ by $T_t x$, we get 
\begin{align*}
\l(e^h-1\r) d\l(T_{t+h} x,F T_{t+h} x\r) & \leq \int_t^{t+h}\l(e^{r-t}-1\r) d\l(T_r x,F T_r x\r) \di r \\ & \quad +d\l(T_{t+h}x,T_t x\r).
\end{align*}
By an easy calculation, we obtain
\begin{align*}
d\l(T_{t+h}x,T_t x\r) & \geq \l(e^h-1\r) \l[d\l(T_{t+h} x,F T_{t+h} x\r) -d\l(T_{t} x,F T_{t} x\r) \r] \\ & \quad +h d\l(T_{t} x,F T_{t} x\r) ,
\end{align*}
which proves \eqref{eq:limitestim}. Now \eqref{eq:limitestim} and \eqref{eq:semiestim} 
yield
\begin{align*}
\lim_{t\to\infty} d\l(T_t x,F T_t x\r) & \leq \limsup_{h\to\infty} 
\frac1h  d\l(T_{h} x,  x\r) \\ & \leq \lim_{h\to\infty}\frac1h\int_0^h 
d\l(T_r x,F T_r x\r) \di r \\ & = \lim_{t\to\infty} d\l(T_t x,F T_t x\r) 
\end{align*}
and thus
\begin{equation} \label{eq:limsup}
 \lim_{t\to\infty} d\l(T_t x,F T_t x\r) = \limsup_{h\to\infty} \frac1h  d\l(T_{h} x, x\r).
\end{equation}
Let now $y\in\hs.$ Then
\begin{equation*} 
 \limsup_{h\to\infty} \frac{d\l(T_{h} x, x\r)}{h}
\le \limsup_{h\to\infty} \frac1h \l[ d\l(T_{h} x, T_{h}y\r) + d\l(T_{h} y,y\r)
 +d\l(y,x\r) \r]\le \limsup_{h\to\infty} \frac{d\l(T_{h} y, y\r)}{h}
\end{equation*}
and since $y$ was arbitrary, the value on the left-hand side is 
independent of~$x.$ Consequently, by virtue of~\eqref{eq:limsup}, 
the limit in \eqref{eq:limit} is independent of~$x$ and is therefore 
equal to~$0$ because one may choose $x\in\fix F.$

To finish the proof, choose a sequence $t_n\to\infty$ and set $x_n\as 
T_{t_n} x_0.$ Since $T_t$ is nonexpansive, we know that the 
sequence $(x_n)$ is Fej\'er 
monotone with respect to $\fix F.$ In particular, $(x_n)$ is bounded and therefore has a 
weak cluster point $z\in\hs.$ It suffices to show that $z\in\fix F.$ We easily get
\begin{align*}
 \limsup_{n\to\infty} d\l(Fz,x_n\r)& \leq \limsup_{n\to\infty} 
d\l(Fz,Fx_n\r) + \limsup_{n\to\infty} d\l(Fx_n,x_n\r) \\ &\leq \limsup_{n\to\infty} 
d\l(z,x_n\r),
\end{align*}
which by the uniqueness of the weak limit yields $z=Fz.$ Here we used, of 
course, the fact that the limit in~\eqref{eq:limit} is $0.$

It is easy to see that $z$ is independent of the choice of the sequence $\l(t_n\r)$ and therefore $T_t x_0\wto z.$
\end{proof}



\section{Discrete and continuous heat flows in singular spaces} \label{sec:heat}

There has been considerable interest in harmonic mappings between 
singular spaces and several (nonequivalent) approaches have been 
developed in the case of an Hadamard space target. 
See, for example, 
\cite{gromovschoen,jost94,jost97,jost-ch,korevaarschoen,ohta,sturm01,sturm02,sturm05}. 
We will follow~\cite{sturm01} and consider an $L^2$-Dirichlet problem for 
mappings from a measure space equipped with a~symmetric Markov kernel 
to an Hadamard space. Under the assumption that the Markov kernel 
satisfies an $L^2$-spectral bound condition, 
it is shown in~\cite{sturm01} that a Dirichlet problem has a unique solution 
and that an associated heat flow (defined as a gradient flow of the energy 
functional) converges to this solution. The $L^2$-spectral bound condition 
is completely natural albeit rather strong, because the heat flow 
semigroup is then a~contracting mapping and converges to the unique 
solution exponentially fast~\cite{sturm01}. Since the energy functional 
is a convex continuous function (see \cite[page 342]{sturm01}), 
one can alternatively apply Theorem~\ref{thm:flow} and conclude that the 
heat flow converges \emph{weakly} to a solution to the Dirichlet problem 
provided there exists a solution. This, of course, does not require 
a~spectral bound condition.

In the present paper we use a somewhat different approach than~\cite{sturm01} to construct discrete and continuous time heat flows, namely 
formulae~\eqref{eq:ppaf} and~\eqref{eq:exsemf}. Then we employ 
Proposition~\ref{prop:ppaf} and Theorem~\ref{thm:semigroupf} to obtain the convergence of these heat flows to a~solution to the Dirichlet problem. Let us first formulate the Dirichlet problem for singular spaces. 
For the details, see the original paper~\cite{sturm01}.

Let $(M,\calm,\mu)$ be a measure space with a $\sigma$-algebra $\calm$ 
and a measure $\mu$, and assume that it is complete in the sense that all 
subsets of a $\mu$-null set belong to~$\calm.$ 
Given a set $D\in\calm,$ define 
\begin{equation*}
L^2(D)\as\l\{u\in L^2(M)\col u=0 \text{ a.e. on } M\setminus D \r\}.
\end{equation*}

Next, let $(\hs,d)$ be an Hadamard space, fix a measurable mapping 
$h\col M\to\hs$ and consider the nonlinear Lebesgue space $L^2(D,\hs,h)$ of 
measurable mappings $f\col M\to\hs$ satisfying 
\begin{equation*}
 d\l(f(\cdot),h(\cdot)\r) \in L^2(D).
\end{equation*}
The space $L^2(D,\hs,h),$ when equipped with the metric
\begin{equation*}
d_2(f,g)\as \biggl( \int_M d\l(f(x),g(x)\r)^2 \mu(\di x) \biggr)^{\frac12},
\end{equation*}
is again an Hadamard space.

Let $p\as p(x,\di y)$ be a Markov kernel, which is symmetric with respect to $\mu,$ that is, we have $p(x,\di y) \mu(\di x) = p(y,\di x) \mu(\di y)$ for every $x,y\in M.$ Then one can define the nonlinear Markov operator $P\col L^2(M,\hs,h)\to L^2(M,\hs,h)$ by
\begin{equation*}
Pf(x)\as \argmin_{z\in\hs} \int_M d\l(z,f(y)\r)^2 p(x,\di y),
\end{equation*}
where $f \in L^2(M,\hs,h).$ By \cite[Theorem 5.2]{sturm01}, we know that
\begin{equation*}
d_2\l(Pf,Pg\r)\leq d_2\l(f,g\r) ,
\end{equation*}
for every $f,g\in L^2(M,\hs,h),$ that is, the nonlinear Markov operator is nonexpansive on $L^2(M,\hs,h).$ A fixed point of $P$ is called a harmonic mapping.
\begin{rem}
If $M\subset \rls^n$ is a bounded set and $P\col L^2(M,\rls)\to L^2(M,\rls)$ is the usual 
(linear) Markov operator, then the Laplacian satisfies $\Delta= I-P,$ 
where $I\col L^2(M,\rls)\to L^2(M,\rls)$ is the identity operator, and we see that a 
function $f\col M\to\rls$ is harmonic if $\Delta f=0.$
\end{rem}
 
Since we are concerned with the Dirichlet problem, a somewhat refined notion of a nonlinear Markov operator is needed. Given $D\in\calm,$ define a new Markov kernel
 \begin{equation*}
 p_D(x,\di y)\as \chi_D(x)p(x,\di y) +\chi_{M\setminus D}\delta_x(\di y),
 \end{equation*}
for every $x,y\in M.$ Denote by $\pd$ the nonlinear Markov operator associated with the 
kernel $p_D.$ Then we have
\begin{equation*} \pd f(x)=\l\{
  \begin{array}{ll}
     Pf(x) & \text{if }x\in D \vspace{5pt} \\ 
             f(x) & \text{if }x\in M\setminus D,
  \end{array}
 \r.
\end{equation*}
and $\pd \col L^2(D,\hs,h)\to L^2(D,\hs,h)$ is also nonexpansive.
\begin{defi}[$L^2$-Dirichlet problem]
 Let $D\in\calm$ and $h\col M\to\hs$ be a measurable mapping. Is there $f\in L^2(D,\hs,h)$ such that $\pd f=f$?
\end{defi}
In~\cite[Theorem 6.4]{sturm01}, the author shows that the Dirichlet 
problem has a unique solution provided the linear operator
\begin{equation*}
 \pd f(x)\as \int_M f(y) p_D(x,\di y),\qquad f\in L^2(M),
\end{equation*}
satisfies the spectral bound condition $\lambda_k>0$ for some $k\in\nat,$ where
\begin{equation*}
 \lambda_k\as 1-\|\pd^k\|_{L^2(D)}.
\end{equation*}
Under this assumption, one also gets strong (and exponentially fast) convergence of an associated heat flow (defined as a gradient flow of the energy) to the (unique) solution to the Dirichlet 
problem~\cite{sturm01}. In contrast, we define a heat flow by formula \eqref{eq:exsemf} for $F\as\pd$ and moreover do not assume any spectral bound condition. Applying Proposition~\ref{prop:ppaf} and Theorem~\ref{thm:semigroupf} 
with $F\as\pd$, we see that if there exists a solution to the 
Dirichlet problem 
for a measurable mapping $h\col M\to\hs,$ both the proximal point algorithm
(discrete time heat flow) and the semigroup (continuous time heat flow) 
weakly converge to a mapping $f\in L^2(D,\hs,h)$ such that $\pd f=f.$
 
We finish our paper by making the following conjecture.
\begin{con}
The heat flow semigroup $\l(T_t\r)$ defined by \eqref{eq:exsemf} 
for $F\as\pd$ coincides with the heat flow constructed 
in \cite[Theorem 8.1]{sturm01}.
\end{con}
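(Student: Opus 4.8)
Towards a proof of this conjecture, the natural plan is to reduce it to a single ``elliptic'' identity between the two corresponding resolvents and then to observe that both semigroups are produced from those resolvents by one and the same exponential formula. Recall (see Section~\ref{sec:heat} and \cite[page 342]{sturm01}) that the heat flow of \cite[Theorem 8.1]{sturm01} is the gradient flow semigroup of the (convex and continuous) energy functional $\mathcal{E}$ on the Hadamard space $L^2(D,\hs,h)$; by the exponential formula~\eqref{eq:defsem} it equals
\begin{equation*}
 S_t f=\lim_{n\to\infty}\l(J_{\frac{t}{n}}\r)^{n}f,\qquad f\in L^2(D,\hs,h),
\end{equation*}
where $J_\lam g\as\argmin_{f}\l[\mathcal{E}(f)+\tfrac{1}{2\lam}\,d_2(g,f)^2\r]$ is the resolvent~\eqref{eq:defres} of $\mathcal{E}.$ On the other hand, $T_t f=\lim_{n\to\infty}\l(R_{\frac{t}{n}}\r)^{n}f$ by~\eqref{eq:exsemf}, where $R_\lam$ is the resolvent of $F\as\pd.$ Since both limits are built from their respective resolvents in exactly the same manner, it suffices to prove that these resolvents coincide: $R_\lam=J_\lam$ for every $\lam>0$ (possibly after replacing $\mathcal{E}$ by a positive multiple, which merely reparametrizes the semigroup by $t\mapsto ct$). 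In this way the whole problem is reduced to a single identity relating the nonlinear Markov operator to the energy.

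To prove $R_\lam=J_\lam,$ I would compare the variational characterizations of the two resolvents. The point $R_\lam g$ is the unique solution of the fixed-point equation $R_\lam g=\frac1{1+\lam}\,g+\frac{\lam}{1+\lam}\,\pd\l(R_\lam g\r),$ where, as in the definition of $G_{x,\lam},$ the right-hand side denotes the appropriate point of a geodesic in $L^2(D,\hs,h).$ The point $J_\lam g,$ on the other hand, is characterized by the Hadamard-space optimality condition for the Moreau--Yosida minimizer, which says, informally, that $\tfrac1\lam$ times the initial direction of the geodesic from $J_\lam g$ to $g$ is a subgradient of $\mathcal{E}$ at $J_\lam g.$ In the linear, real-valued case the two conditions coincide, since $\partial\mathcal{E}$ is then the Dirichlet Laplacian $I-\pd$ (up to the normalization of $\mathcal{E}$) and $\l(I+\lam(I-\pd)\r)^{-1}=\l((1+\lam)I-\lam\pd\r)^{-1}.$ I would therefore try to establish the nonlinear analogue---that $\pd$ plays, relative to $\mathcal{E},$ the role played in the Hilbert case by $I-\partial\mathcal{E},$ so that indeed $R_\lam=J_\lam$---by an explicit first-variation computation: differentiating $f\mapsto\mathcal{E}(f)$ along geodesics in $L^2(D,\hs,h)$ and combining the first-variation formula for the squared distance in an Hadamard space with the barycentric definition $\pd f(x)=\argmin_{z\in\hs}\int_M d\l(z,f(y)\r)^2\,p_D(x,\di y),$ whose own first-order optimality condition ties $\pd f(x)$ to an averaged direction at $f(x).$ The first-variation computation of the energy carried out in \cite{sturm01} is the natural starting point.

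A more flexible alternative, should the resolvents fail to match exactly, is to characterize \emph{both} semigroups through the evolution variational inequality for $\mathcal{E}$: namely, $u(t)=S_t f$ is the unique locally absolutely continuous curve such that $\tfrac12\,\tfrac{\di}{\di t}\,d_2\l(u(t),g\r)^2+\mathcal{E}\l(u(t)\r)\leq\mathcal{E}(g)$ holds for every $g\in L^2(D,\hs,h)$ and almost every $t>0.$ That $S_t$ satisfies this is standard Hadamard-space gradient flow theory; for $T_t$ one would differentiate $t\mapsto d_2\l(T_t f,g\r)^2$ along the resolvent approximation $\l(R_{\frac{t}{n}}\r)^{n}f,$ use the resolvent estimate~\eqref{eq:resestim} together with the nonexpansiveness of $\pd,$ and then pass to the limit as $n\to\infty.$ Here too, the crucial input is an inequality relating $\pd$ to the values of $\mathcal{E}.$

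The main obstacle is precisely this nonlinear discrepancy: barycentric averaging---the operator $\pd$---and the $L^2$-gradient of the quadratic energy $\mathcal{E}$ agree tautologically over a linear target, but over a general Hadamard target the barycentric map need not be the gradient of $\mathcal{E},$ so the identity $R_\lam=J_\lam$ (equivalently, ``$I-\pd=\partial\mathcal{E}$'') may require an additional geometric hypothesis on $\hs,$ or may hold only after a reparametrization of time. Settling this---presumably by a careful first-variation analysis in $L^2(D,\hs,h)$ built on the convexity inequality~\eqref{eq:cat}---is the heart of the conjecture.
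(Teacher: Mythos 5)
This statement is explicitly labelled a \emph{conjecture} in the paper and the authors offer no proof of it; your proposal, by your own admission in its final paragraph, does not close it either, so there is a genuine gap rather than a complete argument. Concretely: your reduction to the identity $R_\lam=J_\lam$ between the resolvent of $F\as\pd$ and the Moreau--Yosida resolvent of the energy $\mathcal{E}$ is logically sufficient but almost certainly too strong a target. Even granting the heuristic ``$I-\pd=\partial\mathcal{E}$,'' the two resolvents are different discretization schemes of the same flow (an implicit Euler step for the energy versus the fixed point of $y\mapsto\frac1{1+\lam}g+\frac{\lam}{1+\lam}\pd y$), and outside the linear setting there is no reason for them to agree \emph{pointwise for each fixed $\lam$} even when the limiting semigroups coincide; a proof that insists on $R_\lam=J_\lam$ will likely fail at the first-variation step, because over a general Hadamard target the barycentric operator $\pd$ need not be the exact ``gradient'' of $\mathcal{E}$ in the subdifferential sense, only an approximation of it to first order in $\lam$. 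What the exponential formulas \eqref{eq:defsem} and \eqref{eq:exsemf} actually require is a \emph{consistency} estimate of the form $d_2\l(R_\lam f,J_\lam f\r)=o(\lam)$ uniformly on bounded sets, together with a stability/uniqueness statement for the limit (your EVI alternative is the right vehicle for the latter). Neither the consistency estimate nor the verification that $\l(T_t\r)$ satisfies the evolution variational inequality for $\mathcal{E}$ is carried out; these are precisely the open content of the conjecture, and the first-variation analysis of $\mathcal{E}$ in $L^2(D,\hs,h)$ that you defer to ``the natural starting point'' is where the real work lies. As a plan of attack your second (EVI-based) route is the more promising one and is consistent with how such identifications are proved in \cite{sturm05} and in the gradient-flow literature, but as written the proposal establishes nothing beyond the reduction.
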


\subsection*{Acknowledgment.} We thank the referee for providing us with 
pertinent comments and helpful suggestions.



\end{document}